\renewcommand{\Re}{\operatorname{Re}}
\renewcommand{\Im}{\operatorname{Im}}
\newcommand{\C}{\mathbb{C}}
\newcommand{\D}{\mathbb{D}}
\newcommand{\Z}{\mathbb{Z}}
\newcommand{\F}{\mathcal{F}}
\newcommand{\T}{\mathbb{T}}
\newcommand{\conj}[1]{\overline{#1}}
\newcommand{\<}{\langle}
\renewcommand{\hat}[1]{\widehat{#1}}
\renewcommand{\>}{\rangle}
\newcommand{\adj}[1]{#1^{*}}
\begin{document}
\newtheorem{theorem}{Theorem}
\newtheorem{proposition}{Proposition}
\newtheorem{lemma}{Lemma}
\newtheorem*{definition}{Definition}
\newtheorem{corollary}{Corollary}
\newtheorem{remark}{Remark}
\title{Unions of arcs from Fourier partial sums}
\begin{abstract}
Elementary complex analysis and Hilbert space methods show that a union of at most $n$ arcs on the circle is uniquely determined by the $n$th Fourier partial sum of its characteristic function.  The endpoints of the arcs can be recovered from the coefficients appearing in the partial sum by solving two polynomial equations.
\end{abstract}
\date{\today}
\author{Dennis Courtney}
\address{Department of Mathematics, University of Iowa, Iowa City, IA 52242}
\email{dennis-courtney@uiowa.edu}

\subjclass[2000]{Primary: 42A16, 46N99}
\thanks{The author was partially supported by the University of Iowa Department of Mathematics NSF VIGRE grant DMS-0602242.}
\maketitle
\thispagestyle{empty}

We let $\T = \{z \in \C: |z| = 1\}$ and $\D = \{z \in \C: |z| < 1\}$, and for any subset $E$ of $\T$ and integer $k$ we write
$$
\hat{E}(k) = \frac{1}{2\pi} \int_E e^{-ikt} \, dt
$$
for the $k$th Fourier coefficient of the characteristic function $\chi_E$ of $E$.  As bounded functions with the same sequence of Fourier coefficients agree almost everywhere, any subset $E$ of $\T$ is determined up to a set of measure zero by the sequence $\hat{E}(k)$.  If $E$ is known to have additional structure, the entire sequence may not be needed to recover $E$.  Our present subject is a simple yet nontrivial illustration of this principle.

An \emph{arc} is by definition a closed, connected, proper and nonempty subset of $\T$.  We declare $\T$ along with the empty set to be a ``union of $0$ arcs.''  
\begin{theorem}\label{main}
If $n$ is a nonnegative integer and $E_1$ and $E_2$ are unions of at most $n$ arcs satisfying
\begin{equation}\label{four}
\hat{E_1}(k) = \hat{E_2}(k), \qquad 0 \leq k \leq n,
\end{equation}
then $E_1 = E_2$.
\end{theorem}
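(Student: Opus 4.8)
The plan is to reduce the whole statement to a single orthogonality relation in $L^2(\T)$ and then contradict it with a carefully built trigonometric polynomial. I would set $g = \chi_{E_1} - \chi_{E_2}$. Since each $\chi_{E_j}$ is real we have $\hat{E_j}(-k) = \conj{\hat{E_j}(k)}$, so the hypothesis \eqref{four} for $0 \le k \le n$ upgrades to $\hat{g}(k) = 0$ for all $|k| \le n$. Equivalently, $\int_{\T} P g \, d\theta = 0$ for every trigonometric polynomial $P$ of degree at most $n$. The goal then becomes: assuming $E_1 \ne E_2$, produce such a $P$ with $\int_{\T} P g \, d\theta > 0$, a contradiction that forces $E_1 = E_2$.

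The device for producing $P$ is an elementary representation of a union of arcs as a sublevel set of a low-degree trigonometric polynomial. For a union $E$ of $r \le n$ arcs with distinct endpoints $e^{i\theta_1}, \dots, e^{i\theta_{2r}}$, I would set $t_E(\theta) = \pm \prod_{s=1}^{2r} \sin\frac{\theta - \theta_s}{2}$; using $e^{i\theta} - e^{i\theta_s} = 2i\, e^{i(\theta+\theta_s)/2}\sin\frac{\theta-\theta_s}{2}$ one checks this is a real trigonometric polynomial of degree exactly $r$ whose only zeros are simple zeros at the endpoints. The $2r$ endpoints cut $\T$ into $2r$ arcs belonging alternately to $E$ and to its complement, and $t_E$ changes sign exactly at each endpoint, so after fixing the global sign $\pm$ we obtain $t_E < 0$ on the interior of $E$ and $t_E > 0$ off $E$. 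The degenerate ``unions of $0$ arcs'' are covered by the constants $t_\emptyset \equiv 1$ and $t_\T \equiv -1$.

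With $t_{E_1}, t_{E_2}$ in hand I take $P = t_{E_2} - t_{E_1}$, a real trigonometric polynomial of degree at most $\max(r_1, r_2) \le n$. The crucial point is a sign match: wherever $g \ne 0$ the two characteristic functions disagree, so $t_{E_1}$ and $t_{E_2}$ have strictly opposite signs there, and a one-line case check gives $\operatorname{sign} P = \operatorname{sign} g$ on $\{g \ne 0\}$ away from the finitely many endpoints. Hence $Pg = |P|$ on $\{g \ne 0\}$ and $Pg = 0$ elsewhere, so $\int_{\T} Pg\,d\theta = \int_{\{g \ne 0\}} |P|\,d\theta$. If $E_1 \ne E_2$ then $\{g \ne 0\}$ has positive measure; moreover $P \not\equiv 0$, since $P \equiv 0$ would force $t_{E_1} = t_{E_2}$ and hence $E_1 = E_2$. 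As a nonzero trigonometric polynomial vanishes only on a finite set, the integral is strictly positive, contradicting the orthogonality.

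The routine parts are the trigonometric identity and the bookkeeping of signs. The main obstacle is the construction and correctness of $t_E$: I must ensure the degree is controlled by the number of arcs (so that $\deg P \le n$), that the zeros land simply and exactly at the endpoints, and that the alternation of signs is synchronized with membership in $E$, together with a clean treatment of the degenerate cases (the set empty, all of $\T$, or $E_1$ and $E_2$ sharing endpoints) so that the sign match is valid almost everywhere. Once $t_E$ is in place, the Hilbert-space orthogonality closes the argument at once.
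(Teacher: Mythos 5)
Your argument is correct, and it takes a genuinely different and more elementary route than the paper. The paper moves the problem into complex analysis: it forms the analytic completion $h_E$ of the Fourier data, composes with a conformal map of the strip onto the disc to produce a Blaschke product $b_E$ of order at most $n$ with $U_{b_E}=E$, and then shows by a Toeplitz-operator norm argument (Carath\'eodory--Fej\'er, via Lemma~\ref{biglemma}) that such a Blaschke product is determined by its first $n+1$ Taylor coefficients. You stay entirely inside real-variable $L^2$ duality: since $\chi_{E_j}$ is real, the hypothesis \eqref{four} says $g=\chi_{E_1}-\chi_{E_2}$ is orthogonal to every trigonometric polynomial of degree at most $n$, and you exhibit one such polynomial, $P=t_{E_2}-t_{E_1}$, whose sign agrees with that of $g$, forcing $\int_{E_1\triangle E_2}|P|\,d\theta=0$. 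The building block is sound: a product of an even number of half-angle sines is a real trigonometric polynomial of degree equal to half the number of factors (your identity $e^{i\theta}-e^{i\theta_s}=2ie^{i(\theta+\theta_s)/2}\sin\frac{\theta-\theta_s}{2}$ shows this), with simple zeros exactly at the $\theta_s$, and the sign alternation synchronizes with membership in $E$ once $E$ is replaced by the disjoint union of its $r\le n$ connected components --- a one-line reduction worth stating explicitly. The sign check is in fact cleaner than you claim: on $E_1\setminus E_2$ one has $t_{E_1}\le 0<t_{E_2}$ pointwise (the zeros of $t_{E_2}$ lie in the closed set $E_2$), so $P>0$ on all of $E_1\setminus E_2$, and symmetrically $P<0$ on $E_2\setminus E_1$; thus $|P|>0$ on $E_1\triangle E_2$ and no appeal to the finiteness of the zero set of a trigonometric polynomial is needed. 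This is essentially a Sturm--Hurwitz/Chebyshev-system sign-change argument, with the nice twist that the difference $t_{E_2}-t_{E_1}$ handles the fact that $g$ vanishes on sets of positive measure. What your route buys is brevity and elementarity; what it gives up is everything in \S\ref{algorithms}: the Blaschke-product parametrization, the characterization of which tuples arise as Fourier data, and the explicit recovery of the endpoints as roots of $Mq\pm q$. One caveat you share with the paper: if single-point arcs are admitted by the stated definition, Fourier coefficients cannot see them, and both proofs really conclude $E_1=E_2$ up to measure zero; exact equality requires the arcs to be nondegenerate.
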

Thus a set $E$ that is known to be a union of at most $n$ arcs can be recovered \emph{completely} from the $n$th Fourier partial sum of $\chi_E$, regardless of any quantitative sense in which this partial sum fails to approximate $\chi_E$.  This stands in slight contrast to the well-known defects of Fourier partial sum approximation of functions with jump discontinuities, such as the Gibbs phenomenon (see e.g. \cite[Chapter 17]{kornerbook}).  Significantly, the property of the Fourier basis expressed by Theorem~\ref{main} is not shared by other orthonormal systems of functions on $\T$ (see \S\ref{section.toep}).

Our proof of Theorem~\ref{main} exploits a connection between unions of arcs and certain rational functions--- the \emph{Blaschke products}, whose properties we recall in \S\ref{section.blaschke}.  Each Blaschke product has a nonnegative integer \emph{order}.  In \S\ref{section.const} we construct an injection $E \mapsto b_E$ from the set of finite unions of arcs to the set of Blaschke products with the property that if $E$ is a union of at most $n$ arcs, then $b_E$ has order at most $n$.  This map has the property that if $E_1$ and $E_2$ satisfy \eqref{four}, then $b_{E_1}$ and $b_{E_2}$ have the same $n$th order Taylor polynomial at $0$.  To prove Theorem~\ref{main} it then suffices to note, as we do in \S\ref{section.toep}, how a Blaschke product of order at most $n$ is determined by its $n$th order Taylor polynomial.  

With Theorem~\ref{main} in hand, one may ask how to recover $E$ from a partial list of Fourier coefficients in an explicit fashion.  This is the subject of \S\ref{algorithms}, where we present an algorithm for testing whether or not a given tuple of complex numbers takes the form $(\hat{E}(k))_{k=0}^n$ for a union $E$ of at most $n$ arcs, and for finding the endpoints of these arcs in terms of the Fourier coefficients in this case.  

Perhaps because of its elementary nature, we have not found Theorem~\ref{main} explicitly stated in the literature, although it is known, and the literature abounds with more general theorems on the reconstruction of a function from partial knowledge of its Fourier transform.  In \cite{newpaper} it is shown that a function on $\T$ that is piecewise constant on a partition of $\T$ into $m$ connected pieces may be recovered from its $m$th Fourier partial sum.  Note that Theorem~\ref{main} concludes slightly more from a much stronger hypothesis.

The argument we use is known to specialists.  The basic idea is to apply a conformal map into the disc and then the classical Caratheodory-Fejer theorem \cite{caratheodoryfejer}.  This is by no means the only approach to Theorem~\ref{main}.  It should be contrasted with what one may get by viewing \eqref{four} as a system of polynomial equations and solving it directly with algebra.

We are indebted to Donald Sarason for many valuable discussions, and to Mihalis Kolountzakis for drawing our attention to \cite{newpaper}.

\section{Blaschke products}\label{section.blaschke}
\begin{definition} A (finite) Blaschke product is a function of the form 
\begin{equation}\label{blaschkeformula}
b(z) = \lambda \prod_{j=1}^n \frac{z-a_j}{1 - \overline{a_j} z}
\end{equation}
for some nonnegative integer $n$, some $\lambda \in \T$, and some $a_1, \dots, a_n \in \D$.  The nonnegative integer $n$ is called the \emph{order} of the Blaschke product.  
\end{definition}
If $n=0$ we interpret the empty product as $1$.  The domain of a Blaschke product is either $\T$, $\D$, or the closure $\overline{\D}$ of $\D$, depending on context.  A Blaschke product is evidently a rational function that maps $\T$ to itself and has no poles in $\D$ (it suffices to check the case $n=1$).  It is well known that these properties characterize the Blaschke products.
\begin{proposition}\label{wuk} If a rational function $r$ maps $\T$ to itself and has no poles in $\D$, then it is a Blaschke product of order equal to the number $n$ of zeros of $r$ in $\D$, counted according to multiplicity.
\end{proposition}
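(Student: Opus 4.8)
The plan is to divide out the zeros of $r$ in $\D$ by an explicit Blaschke product and to show that the resulting quotient is a unimodular constant. Since $r$ maps $\T$ into $\T$ it is certainly not identically zero, so as a rational function it has only finitely many zeros; let $a_1, \dots, a_n$ be those lying in $\D$, listed with multiplicity, and set
$$
B(z) = \prod_{j=1}^n \frac{z - a_j}{1 - \overline{a_j} z},
$$
a Blaschke product of order $n$ whose zeros in $\D$ are exactly $a_1, \dots, a_n$ and whose poles $1/\overline{a_j}$ all lie outside $\overline{\D}$. I would then prove that $g := r/B$ is a constant of modulus one; granting this, $r = \lambda B$ with $|\lambda|=1$, which is a Blaschke product of order $n$ as claimed.

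The key preliminary observation is that, because $|r| = 1$ on $\T$, the function $r$ has neither zeros nor poles on $\T$, and the same is true of $B$ by construction. I would use this to check that $g$ is holomorphic and zero-free on the \emph{closed} disc $\overline{\D}$: inside $\D$ the zeros of $B$ are cancelled exactly by the zeros of $r$ (this is the whole point of the construction), $r$ contributes no poles there by hypothesis, and $B$ contributes none since its poles lie outside $\overline{\D}$; on $\T$ there are no zeros or poles of either factor to worry about. Moreover $|g| = |r|/|B| = 1$ everywhere on $\T$.

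With $g$ holomorphic on $\overline{\D}$ and unimodular on the boundary, the maximum modulus principle gives $|g| \le 1$ throughout $\D$. Since $g$ is also zero-free on $\overline{\D}$, the same principle applied to $1/g$ gives $|g| \ge 1$ on $\D$. Hence $|g| \equiv 1$ on $\D$, and since a nonconstant holomorphic function cannot have constant modulus, $g$ is a unimodular constant $\lambda$ and $r = \lambda B$.

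I expect the only real obstacle to be the bookkeeping in the middle step: one must argue carefully that $g$ has no pole and no zero anywhere on the closed disc, and this is precisely where the hypotheses are consumed — that $r$ has no poles in $\D$, that the chosen $a_j$ account for \emph{all} zeros of $r$ in $\D$ with the correct multiplicities, that $|r| = 1$ forces $r$ to be finite and nonvanishing on $\T$, and that the poles of $B$ have been pushed outside $\overline{\D}$. An alternative route would invoke the reflection $z \mapsto 1/\overline{z}$ to relate the zeros and poles of $r$ directly, but the division argument seems cleaner and avoids a separate count of poles.
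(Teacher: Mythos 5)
Your proof is correct, and it finishes differently from the paper's. The paper takes the same basic step---dividing out the zeros of $r$ in $\D$ by Blaschke factors---but organizes it as an induction on $n$, peeling off one factor $\frac{z-a}{1-\overline{a}z}$ at a time, and it settles the resulting zero-free base case by an algebraic argument rather than your analytic one: it writes the quotient as $q^{-1}$ for a polynomial $q$ (which presupposes that the numerator of $r$ is constant, i.e.\ that $r$ has no zeros outside $\overline{\D}$ either), uses $q^{-1} = \overline{q}$ on $\T$ to express $q^{-1}$ as $z^{-m}$ times a polynomial, and concludes from the absence of a pole at $0$ that $q$ is constant. You instead form the full quotient $g = r/B$ in one step and finish with the maximum modulus principle applied to $g$ and to $1/g$. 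The bookkeeping you identify as the crux---that $g$ is holomorphic and zero-free on all of $\overline{\D}$, using that $|r|=1$ on $\T$ rules out zeros and poles of $r$ on the circle and that the poles of $B$ lie outside $\overline{\D}$ (when $a_j=0$ the factor is just $z$, with no finite pole, so nothing changes)---is exactly where the hypotheses get used, and you carry it out correctly; the last step, that a holomorphic function of constant modulus on the connected set $\D$ is constant, is standard. The trade-off is that your finish needs the maximum modulus principle but never has to examine $r$ outside $\overline{\D}$, whereas the paper's finish is more elementary in its tools but leans on the reflection identity $r(z)\overline{r(1/\overline{z})}=1$ to control the numerator. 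I see no gap in your argument.
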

\begin{proof}
We induct on $n$.  If $n=0$, then $r = q^{-1}$ for some polynomial $q$; write $q(z) = \sum_{k=0}^m q_k z^k$ with $q_m \neq 0$.  As $q(\T) \subseteq \T$ we have
$$
q(z)^{-1} = \overline{q(z)} = \overline{q((\overline{z})^{-1})} = \sum_{k=0}^m \overline{q_k} z^{-k} = \frac{\sum_{k=0}^m \overline{q_k} z^{m-k}}{z^m}, \qquad z \in \T,
$$
so this holds for all nonzero $z \in \D$.  As $q$ has no zeros in $\D$, the extreme right hand side has no pole at $0$; thus $m=0$ and $q$ is constant as desired.

If $r$ has $n+1$ zeros in $\D$, choose one, $a$, and note that $r(z) \cdot (\frac{z-a}{1-\overline{a}z})^{-1}$ has $n$ zeros in $\D$ and maps $\T$ to itself.
\end{proof}
\begin{definition}
If $b$ is a Blaschke product, we let $U_b = \{z \in \T: \Im z \geq 0\}$.
\end{definition}
If the zeros of a Blaschke product are $a_1, \dots, a_n$, we calculate from \eqref{blaschkeformula}
$$
\frac{z b'(z)}{b(z)} = \sum_{j=1}^n \frac{1 - |a_j|^2}{|z - a_j|^2} > 0, \qquad z \in \T,
$$
so the argument of $b(e^{it})$ is strictly increasing in $t$.  The argument principle implies that $b(e^{it})$ travels $n$ times counterclockwise around $\T$ as $t$ runs from $0$ to $2\pi$.  
\begin{corollary}\label{mapping}
A Blaschke product $b$ has order $n$ if and only if $U_b$ is a disjoint union of $n$ arcs.
\end{corollary}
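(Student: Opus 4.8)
The plan is to read off everything from the two facts just established: that the argument of $b(e^{it})$ is strictly increasing in $t$, and that $b(e^{it})$ winds exactly $n$ times around $\T$ as $t$ runs from $0$ to $2\pi$. I would first recast $U_b$ as the preimage under $b$ of the closed upper semicircle $S = \{w \in \T : \Im w \geq 0\}$. Since $S$ is a closed, connected, proper, nonempty subset of $\T$, it is itself an arc, and the whole corollary becomes the statement that an $n$-fold monotone wrapping of $\T$ pulls this single arc back to $n$ disjoint arcs.

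For the forward implication I would make the winding explicit by choosing a continuous lift $\phi \colon [0, 2\pi] \to \mathbb{R}$ of $t \mapsto \arg b(e^{it})$. The established inequality $\frac{z b'(z)}{b(z)} > 0$ on $\T$ makes $\phi$ strictly increasing, and the argument principle gives $\phi(2\pi) - \phi(0) = 2\pi n$. The condition $\Im b(e^{it}) \geq 0$ is exactly $\phi(t) \bmod 2\pi \in [0,\pi]$, i.e. $\phi(t)$ lies in one of the intervals $[2k\pi, (2k+1)\pi]$. Because $\phi$ is a strictly increasing homeomorphism onto $[\phi(0), \phi(0) + 2\pi n]$, the preimage of each such interval meeting this range is a single closed subinterval of $[0, 2\pi]$, and counting shows there are exactly $n$ of them once the identification $\phi(2\pi) \equiv \phi(0)$ on $\T$ is taken into account. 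These intervals map to $n$ disjoint closed arcs in $\T$, exhibiting $U_b$ as a disjoint union of $n$ arcs. Equivalently, one may observe that $b'$ is nonvanishing on $\T$, so $b|_\T$ is a degree-$n$ covering map of the circle, under which the preimage of the proper arc $S$ is automatically $n$ disjoint arcs.

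The converse I would obtain formally from the forward direction: a Blaschke product has a well-defined order $m$, and the forward implication already shows that $U_b$ is a disjoint union of exactly $m$ arcs. Since the number of connected components of a subset of $\T$ is an invariant of the set, and a disjoint union of $n$ closed arcs has precisely $n$ components, the hypothesis that $U_b$ is a disjoint union of $n$ arcs forces $n = m$.

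The step I expect to require the most care is the bookkeeping in the forward direction: verifying that the $n$ preimage intervals are genuinely separated (between consecutive ones lies a nonempty preimage of the open lower semicircle, using $n \geq 1$), that none of them is improper, and that the endpoints $\pm 1$ of $S$ and the boundary identification at $t = 0, 2\pi$ are handled correctly. I would also dispose of the degenerate case $n = 0$ separately: there $b$ is a unimodular constant $\lambda$, so $U_b$ is either all of $\T$ or empty according to the sign of $\Im \lambda$, each of which counts as a union of $0$ arcs by convention.
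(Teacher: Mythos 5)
Your proposal is correct and follows exactly the route the paper intends: the corollary is stated as an immediate consequence of the strict monotonicity of $\arg b(e^{it})$ and the argument-principle count of $n$ windings, and your lift $\phi$ with the preimage-of-$[2k\pi,(2k+1)\pi]$ bookkeeping (including the wrap-around identification at $t=0,2\pi$ and the $n=0$ case) simply makes explicit what the paper leaves to the reader. No discrepancy to report.
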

This is the main reason we include $\T$ as a ``union of $0$ arcs.''

\section{Blaschke products from unions of arcs}\label{section.const}

Let $S = \{z \in \C: 0 \leq 2 \Re z \leq 1\}$ and let $\phi$ denote the function
$$
\phi(z) = \frac{\exp(2 \pi i (z-1/4)) - 1}{\exp(2 \pi i (z - 1/4)) + 1}.
$$
It is easy to show (see e.g. \cite[\S III.3]{conwaybook}) that $\phi$ maps $S$ bijectively onto $\overline{\D} \setminus \{\pm 1\}$, that $\phi$ restricts to an analytic bijection of the interior of $S$ with $\D$, that $\phi$ maps the right boundary line of $S$ onto $\{z \in \T: \Im z > 0\}$, and that $\phi$ maps the left boundary line of $S$ onto $\{z \in \T: \Im z < 0\}$.
\begin{proposition}\label{hprop}
If $E$ is a disjoint union of $n \geq 0$ arcs and $h_E$ is given by
\begin{equation}\label{definition.he}
h_E(z) = \frac{1}{2} \hat{E}(0) + \sum_{k=1}^{\infty} \hat{E}(k) z^k, \qquad z \in \D,
\end{equation}
then $h_E$ is an analytic map of $\D$ into $S$, and the function $\D \to \overline{\D}$ given by
$$
b_E = \phi \circ h_E
$$
extends uniquely to a Blaschke product $\overline{\D} \to \overline{\D}$ of order $n$ satisfying $U_{b_E} = E$.
\end{proposition}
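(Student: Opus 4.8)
The plan is to obtain a closed form for $h_E$ and then read off every required property of $b_E$ from it. Writing $E$ as a disjoint union of arcs with endpoints $c_j = e^{i\alpha_j}$ and $d_j = e^{i\beta_j}$ for $1 \le j \le n$, a direct evaluation of $\hat{E}(k)$ for $k \ge 1$ together with the series $\sum_{k\ge 1} w^k/k = -\log(1-w)$ gives
$$
h_E(z) = \frac{|E|}{4\pi} + \frac{1}{2\pi i}\sum_{j=1}^n \log\frac{1 - \conj{d_j}\,z}{1 - \conj{c_j}\,z}, \qquad z \in \D.
$$
Since $|\hat{E}(k)| \le |E|/2\pi$, the defining series converges on $\D$, so $h_E$ is analytic there, and the closed form shows it continues analytically across $\T$ away from the $2n$ endpoints $c_j, d_j$. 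Exponentiating, $\exp(2\pi i\, h_E(z)) = e^{i|E|/2}\prod_j (1-\conj{d_j}z)/(1-\conj{c_j}z)$ is a rational function of $z$, and hence so is $b_E = \phi \circ h_E$.

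Next I would locate the image of $h_E$ and control its boundary values. The real part $\Re h_E(re^{it})$ is the Poisson integral of $\tfrac12\chi_E$: expanding the series in $z = re^{it}$ and using $\hat{E}(-k) = \conj{\hat{E}(k)}$ recovers $\tfrac12\sum_k \hat{E}(k)\, r^{|k|} e^{ikt}$. As the Poisson kernel is a probability measure and $\tfrac12\chi_E$ takes values in $[0,\tfrac12]$, we get $0 \le \Re h_E \le \tfrac12$ on $\D$, so $h_E(\D) \subseteq S$ (inside the open strip unless $E$ or $\T\setminus E$ is null). Consequently $b_E$ maps $\D$ into $\D$ and has no poles there. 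On each open arc making up $E$ or its complement, the continuous boundary values supplied by the closed form have $\Re h_E \in \{0,\tfrac12\}$, so $h_E$ lands on an edge of $S$ and $\phi$ carries it into $\T$; being rational and unimodular on a set with a limit point, $b_E$ then satisfies $|b_E| = 1$ on all of $\T$. Proposition~\ref{wuk} now applies and shows $b_E$ is a Blaschke product.

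To finish I would identify $U_{b_E}$ and the order. Where $\Re h_E = \tfrac12$, namely on $E$, the boundary value of $h_E$ sits on the right edge of $S$, which $\phi$ carries to the upper semicircle, so $\Im b_E > 0$; off $E$, where $\Re h_E = 0$, it sits on the left edge and $\Im b_E < 0$; at the $2n$ endpoints the imaginary part of $h_E$ blows up logarithmically, forcing $b_E \to \pm 1$ by the behavior of $\phi$, so $b_E$ extends continuously to $\overline{\D}$ with $\Im b_E = 0$ there. Hence $U_{b_E} = \{z\in\T : \Im b_E(z) \ge 0\} = E$, and Corollary~\ref{mapping} gives order exactly $n$. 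Uniqueness of the extension is immediate, since $\D$ is dense in $\overline{\D}$ and the Blaschke product $b_E$ (whose poles $1/\conj{a_j}$ lie outside $\overline{\D}$) is continuous there. The degenerate cases $E = \emptyset$ and $E = \T$ yield the constants $b_E = -i$ and $b_E = i$ and are checked directly.

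The step I expect to be most delicate is this boundary analysis: establishing that $\Re h_E = \tfrac12\chi_E$, placing $h_E$ on the correct edge of $S$, and in particular taming the logarithmic singularities at the arc endpoints so that $b_E$ is genuinely continuous on $\overline{\D}$ and lands on $\T$ everywhere rather than merely on the open arcs. Having the closed form for $h_E$ in hand is what makes this manageable, since it simultaneously exhibits the rationality that upgrades unimodularity from the arcs to all of $\T$ and makes the endpoint behavior explicit.
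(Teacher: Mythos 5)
Your proposal is correct and follows essentially the same route as the paper: identify $2\Re h_E$ as the Poisson integral of $\chi_E$ to get $h_E(\D)\subseteq S$, use the explicit logarithmic closed form to see $\exp(2\pi i h_E)$ (hence $b_E$) is rational, deduce unimodularity on $\T$ and apply Proposition~\ref{wuk} and Corollary~\ref{mapping}. The only cosmetic differences are that you compute the closed form for general $n$ rather than reducing to a single arc, and you read the boundary values off the closed form where the paper instead uses the concentration of the Poisson kernel at points of continuity of $\chi_E$; both are standard and both work.
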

Using the formulas for $\phi$ and $h_E$ one can show without much work that $b_E$ is a rational function; the work in proving Proposition~\ref{hprop} is to establish that $b_E$ has the mapping properties of Proposition~\ref{wuk}, and hence is a Blaschke product, and to prove that $U_{b_E} = E$.  

To motivate the argument, let us work nonrigorously for a moment.  Formally we have the series expansion
\begin{equation}\label{chie}
\chi_E(z) = \sum_{k \in \Z} \hat{E}(k) z^k, \qquad z \in \T,
\end{equation}
and formal manipulation of the series \eqref{definition.he} with $z \in \T$ then shows that 
$$
\chi_E(z) = h_E(z) + \overline{h_E(z)} = 2 \Re h_E(z), \qquad z \in \T.
$$
As $\chi_E$ is $\{0, 1\}$ valued on $\T$, the maximum principle for harmonic functions then implies that $h_E$ maps $\D$ into $S$, so $b_E = \phi \circ h_E$ maps $\overline{\D}$ into $\overline{\D}$ and sends the circle to itself.  By Proposition~\ref{wuk} it follows that $b_E$ is a Blaschke product; the equality $U_{b_E} = E$ comes from the mapping properties of $\phi$ on the boundary of $S$.  

What makes this argument nonrigorous is that the series \eqref{chie} does not converge for all $z \in \T$, and to equate $\chi_E$ with $2 \Re h_E$ is to ignore the distinction between a discontinuous real valued function on $\T$ and a harmonic function on $\D$.  To fill in these gaps, we need to use the actual connection between $2 \Re h_E$ and $\chi_E$--- the former is the Poisson integral of the latter.
\begin{proof}
It is easily checked that \eqref{definition.he} does define an analytic function on $\D$, e.g. because $\sum_{k=1}^{\infty}|\hat{E}(k)|^2$ is convergent.  One can then verify the identity
$$
2 h_E(z) = \frac{1}{2\pi} \int_0^{2\pi} \frac{1 + z e^{-is}}{1 - z e^{-is}} \chi_E(e^{is}) \, ds, \qquad z \in \D.
$$
(Fix $z$, expand $\frac{1}{1 - z e^{-is}}$ as a power series in $z$ and interchange the sum and the integral.)  Taking real parts it follows that for any $r \in [0,1)$ and any $t$
\begin{equation}\label{realpart}
2 \Re h_E(r e^{it}) = \frac{1}{2\pi} \int_0^{2\pi} P_r(t-s) \chi_E(e^{is}) \, ds, 
\end{equation}
where 
$$
P_r(t) = \Re \left(\frac{1 + r e^{it}}{1 - r e^{it}}\right)
$$
is the \emph{Poisson kernel}.  It is elementary (see e.g. \cite[\S X.2]{conwaybook}) that for $r \in [0,1)$ the function $P_r$ is nonnegative and satisfies $\frac{1}{2\pi} \int_0^{2\pi} P_r(\theta) \, d\theta = 1$; thus \eqref{realpart} implies that $2 \Re h_E(z) \in [0,1]$ for all $z \in \D$, and $h_E$ maps $\D$ into $S$.  

As $r$ increases to $1$, the $P_r$ converge uniformly to the zero function on the complement of any neighborhood of $0$ (see e.g. \cite[\S X.2]{conwaybook}).  From \eqref{realpart} we conclude
\begin{equation}\label{boundary}
\lim_{r \uparrow 1} 2 \Re h_E(rz) = \chi_E(z)
\end{equation}
at any $z \in \T$ at which $\chi_E$ is continuous.  We conclude that for any such $z$ the limit $\lim_{r \uparrow 1} (\phi \circ h_E)(rz)$ exists and is in $\T$.

We claim that $\phi \circ h_E$ is a rational function.  In the case $n=0$ this is clear.  Otherwise, from the definition of $\phi$ it suffices to show that $\exp(2 \pi i h_E)$ is a rational function, and for this it suffices to treat the case $n=1$.  In this case there are real numbers $a < b$ with $b-a<2\pi$ satisfying $E = \{e^{it}: t \in [a,b]\}$, and $\hat{E}(k) = \frac{\exp(-ikb) - \exp(-ika)}{-2 \pi ik}$ for all $k > 0$.  Let $\log$ denote the analytic logarithm defined on $\C \setminus \{z \in \C: z \leq 0\}$ that is real on the positive real axis and recall that $\log(1 - z) = -\sum_{k=1}^{\infty} \frac{z^k}{k}$ for all $z \in \D$.  A comparison of power series shows
$$
h_E(z) = \frac{b-a}{4\pi} + \frac{1}{2\pi i} \left(\log(1 - e^{-ib} z) - \log(1 - e^{-ia} z)\right), \qquad z \in \D,
$$
so $\exp(2 \pi i h_E) = \exp(i\frac{b-a}{2}) \frac{1 - e^{-ib} z}{1 - e^{-ia} z}$ is rational.

At this point we know that $b_E = \phi \circ h_E$ is a rational function mapping $\D$ into itself.  From \eqref{boundary} we deduce that $b_E$ maps $\T$ into itself, so $b_E$ is a Blaschke product by Proposition~\ref{wuk}.  The equality $U_{b_E} = E$ then follows from \eqref{boundary}.  The order of $b_E$ is $n$ by Corollary~\ref{mapping}.
\end{proof}
If $E_1$ and $E_2$ are two unions of arcs related by \eqref{four}, it is clear from the definition that $h_{E_1}$ and $h_{E_2}$ have the same $n$th order Taylor polynomial at $0$.  As $\phi$ is analytic at $0$, the same is true of $b_{E_1}$ and $b_{E_2}$.
\begin{corollary}\label{bigcorollary}
If $n \geq 0$ and $E_1$ and $E_2$ are each unions of at most $n$ arcs satisfying
\begin{equation}\label{hyp}
\hat{E_1}(k) = \hat{E_2}(k), \qquad 0 \leq k \leq n,
\end{equation}
then there are Blaschke products $b_1$ and $b_2$, each of order at most $n$, satisfying $E_j = U_{b_j}$ for $j = 1,2$ and
\begin{equation}\label{apply}
\hat{b_1}(k) = \hat{b_2}(k), \qquad 0 \leq k \leq n.
\end{equation}
\end{corollary}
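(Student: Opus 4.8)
The plan is to set $b_j := b_{E_j}$, the Blaschke products furnished by Proposition~\ref{hprop}, and then to verify the three assertions of the corollary: that each $b_j$ has order at most $n$, that $U_{b_j} = E_j$, and that \eqref{apply} holds. Since Proposition~\ref{hprop} is stated for a \emph{disjoint} union of arcs while the hypothesis only gives a union of at most $n$ arcs, one preliminary reduction is needed before that proposition can be invoked.

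First I would reduce to the disjoint case. Given that $E_j$ is a union of at most $n$ arcs, I would repeatedly merge any two arcs that overlap or abut (the union of two closed arcs with a common point is again an arc) until the remaining arcs are pairwise disjoint. This leaves the set $E_j$ unchanged and exhibits it as a disjoint union of some number $m_j \le n$ of arcs, where, by our convention, the cases $E_j = \T$ and $E_j = \emptyset$ correspond to $m_j = 0$. Proposition~\ref{hprop} then produces $b_j = b_{E_j}$, a Blaschke product of order $m_j \le n$ with $U_{b_j} = E_j$, which gives the first two assertions.

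For \eqref{apply} I would pass between Fourier and Taylor coefficients. Because a Blaschke product is a rational function whose poles lie strictly outside $\overline{\D}$, it is analytic on a neighborhood of $\overline{\D}$, so its Taylor series at $0$ converges up to the boundary and its nonnegative Fourier coefficients coincide with its Taylor coefficients: $\hat{b_j}(k)$ equals the coefficient of $z^k$ in the expansion of $b_j$ about $0$ for every $k \ge 0$. It therefore suffices to show that $b_1$ and $b_2$ share their $n$th order Taylor polynomial at $0$. This is precisely the observation recorded just before the corollary: hypothesis \eqref{hyp} forces $h_{E_1}$ and $h_{E_2}$ to have the same $n$th order Taylor polynomial at $0$ by \eqref{definition.he}, and since $\phi$ is analytic at the common value $h_{E_1}(0) = \tfrac12 \hat{E_1}(0) = \tfrac12 \hat{E_2}(0) = h_{E_2}(0)$, the compositions $b_{E_j} = \phi \circ h_{E_j}$ inherit agreement of their first $n+1$ Taylor coefficients. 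Comparing these coefficients yields \eqref{apply}.

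As Proposition~\ref{hprop} carries out all the substantive analysis, no step here is a genuine obstacle; the only points demanding care are the merging argument in the first reduction (to confirm that the number of disjoint arcs does not exceed $n$, so that the order of each $b_j$ stays at most $n$) and the identification of the boundary Fourier coefficients of $b_j$ with its Taylor coefficients at the origin.
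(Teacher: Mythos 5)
Your proposal is correct and follows the paper's own route: the paper proves this corollary by the remark immediately preceding it (hypothesis \eqref{hyp} forces $h_{E_1}$ and $h_{E_2}$, hence $b_{E_1}=\phi\circ h_{E_1}$ and $b_{E_2}=\phi\circ h_{E_2}$, to share their $n$th order Taylor polynomials at $0$) together with Proposition~\ref{hprop}. Your two added points of care --- merging overlapping arcs into a disjoint union of at most $n$ arcs before invoking Proposition~\ref{hprop}, and identifying the nonnegative Fourier coefficients of a Blaschke product with its Taylor coefficients at $0$ --- are correct elaborations of steps the paper leaves implicit.
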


\section{Blaschke products from Toeplitz matrices}\label{section.toep}
Fix a positive integer $n$ for the remainder of this section.  Our goal is to show that Blaschke products $b_1$ and $b_2$ having order at most $n$ and satisfying \eqref{apply} must be equal.  Let $L^2$ denote the space of square-integrable functions $\T \to \C$, with inner product
$$
\<f,g\> = \frac{1}{2\pi} \int_0^{2\pi} f(e^{it}) \overline{g(e^{it})} \, dt, \qquad f, g \in L^2.
$$
(We identify two functions if they agree almost everywhere.)  

For $0 \leq k \leq n$ we let $\zeta^k$ denote the function $\T \to \C$ given by $z \mapsto z^k$.  It is immediate that $\{\zeta^k: 0 \leq k \leq n\}$ is an orthonormal subset of $L^2$.  We denote its span, the space of analytic polynomials of degree at most $n$, by $P$; we let $\pi: L^2 \to P$ denote the orthogonal projection.  
\begin{definition}
If $f: \T \to \C$ is bounded, $T_f: P \to P$ denotes the linear map given by
$$
T_f \xi = \pi( f \xi), \qquad \xi \in P.
$$
Here $f \xi$ is the pointwise product of $f$ and $\xi$.  
\end{definition}
If we let $\|T_f\|$ denote the norm of $T_f$ regarded as a linear operator on $P$ and write $\|f\|_{\infty} = \sup_{z \in \T} |f(z)|$, it is clear that
$$
\|T_f\| \leq \|f\|_{\infty}
$$
for any bounded $f$.  It is also clear that for any such $f$ 
$$
\<T_f \zeta^k, \zeta^j\> = \hat{f}(j-k), \qquad 0 \leq j,k \leq n,
$$
so the matrix of $T_f$ with respect to the orthonormal basis $\{\zeta^k: 0 \leq k \leq n\}$ is constant along its diagonals (it is a \emph{Toeplitz matrix}).

If $f$ is a Blaschke product, then $f$ is analytic on $\overline{\D}$, so the matrix of $T_f$ is lower triangular with first column $(\hat{f}(k))_{k=0}^n$.  Our hypothesis \eqref{apply} is thus that $T_{b_1} = T_{b_2}$, and to deduce that $b_1 = b_2$ it suffices to show how to recover a Blaschke product $b$ of order at most $n$ from the operator $T_b$ it induces on $P$.
\begin{lemma}\label{biglemma}
If $b$ is a Blaschke product of order at most $n$, then $\|T_b\| = 1$, and for any nonzero $r \in P$ satisfying $\|T_b r\| = \|r\|$ one has $T_b r = b r$.
\end{lemma}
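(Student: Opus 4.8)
The key point is that a Blaschke product satisfies $|b| = 1$ everywhere on $\T$, so multiplication by $b$ is an isometry of $L^2$; the operator $T_b$ is merely the compression of this isometry to $P$, and both claims will come from examining when this compression preserves norm. Concretely, for $\xi \in P$ one has $\|b\xi\| = \|\xi\|$ because $|b\xi| = |\xi|$ pointwise on $\T$. Decomposing $b\xi$ orthogonally relative to $P$ as $b\xi = \pi(b\xi) + (b\xi - \pi(b\xi))$ and applying the Pythagorean theorem gives
$$
\|\xi\|^2 = \|b\xi\|^2 = \|T_b\xi\|^2 + \|b\xi - \pi(b\xi)\|^2,
$$
since $T_b\xi = \pi(b\xi)$ by definition. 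Thus $\|T_b\xi\| \le \|\xi\|$, with equality exactly when $b\xi - \pi(b\xi) = 0$, that is, when $b\xi \in P$; and in that case $T_b\xi = \pi(b\xi) = b\xi$. This establishes the second assertion immediately: if $r \neq 0$ satisfies $\|T_b r\| = \|r\|$, then $br \in P$, whence $T_b r = br$.

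To finish it then suffices to produce a single nonzero $\xi \in P$ with $b\xi \in P$, for this forces $\|T_b\| \ge \|T_b \xi\| / \|\xi\| = 1$, which together with the general bound $\|T_b\| \le \|b\|_{\infty} = 1$ yields $\|T_b\| = 1$. I would take $\xi$ to be the denominator of $b$. Writing $b$ of order $m \le n$ in the form $b = \lambda p / q$ with $p(z) = \prod_{j=1}^m (z - a_j)$ and $q(z) = \prod_{j=1}^m (1 - \overline{a_j} z)$, both $p$ and $q$ are polynomials of degree at most $m \le n$ lying in $P$, and $q(0) = 1$ shows $q \neq 0$. Since $bq = \lambda p \in P$, the vector $\xi = q$ does the job.

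I expect no serious obstacle; the argument is in essence a single application of the Pythagorean theorem. The only points requiring attention are the degree bookkeeping that places both $q$ and $bq = \lambda p$ in $P$, and the recognition that the sole inequality in the displayed identity is the one governed by the orthogonal projection---so that equality in $\|T_b \xi\| = \|\xi\|$ is precisely the statement that $b\xi$ is a polynomial of degree at most $n$.
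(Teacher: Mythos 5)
Your proof is correct and follows essentially the same route as the paper: both use the denominator $q$ of $b=\lambda p/q$ as the norm-attaining vector (since $|p|=|q|$ on $\T$ gives $\|bq\|=\|q\|$), and both reduce the equality case to the fact that $\|\pi(br)\|=\|br\|$ forces $\pi(br)=br$. Your Pythagorean decomposition is just a slightly more explicit phrasing of the paper's inequality chain $\|r\|^2=\|\pi(br)\|^2\le\|br\|^2=\|r\|^2$.
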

This proof is a special case of the proof of \cite[Proposition 5.1]{sarasoninterp}.
\begin{proof}
There are nonzero polynomials $p$ and $q$, each of degree at most $n$, satisfying $b = p/q$.  Clearly $T_b q = p$, and as $b$ maps $\T$ to itself, we have $|p(z)| = |q(z)|$ for all $z \in \T$, so $\|p\| = \|q\|$.  We deduce that $\|T_b q\| = \|q\|$ and thus $\|T_b\| \geq 1$; since also $\|T_b\| \leq \|b\|_{\infty} = 1$, we conclude $\|T_b\| = 1$.

If $r \in P$ satisfies $\|T_b r\| = \|r\|$ we have
$$
\|r\|^2 = \|T_b r\|^2 = \|\pi(br)\|^2 \leq \|br\|^2 = \int_0^{2\pi} |b(e^{it})|^2 |r(e^{it})|^2 \, dt = \|r\|^2,
$$
from which $\|\pi(br)\| = \|br\|$ and thus $\pi(br) = br$ as desired.
\end{proof}

\begin{remark}\label{carath}
The argument of Lemma~\ref{biglemma} can be modified to show that if $f$ is bounded and analytic on $\overline{\D}$ and $\|f\|_{\infty} = 1$, then $\|T_f\| \leq 1$ with equality if and only if $f$ is a Blaschke product of order at most $n$.  With more work, one can prove the rest of the classical Caratheodory-Fejer theorem: that every lower triangular $(n+1) \times(n+1)$ Toeplitz $M$ satisfying $\|M\| = 1$ is of the form $T_f$ for such an $f$.
\end{remark}
We can now prove Theorem~\ref{main}.
\begin{proof}[Proof of Theorem~\ref{main}]
By Corollary~\ref{bigcorollary} there are Blaschke products $b_1$ and $b_2$ of order at most $n$ satisfying $U_{b_j} = E_j$ for $j=1,2$ and $\hat{b_1}(k) = \hat{b_2}(k)$ for $0 \leq k \leq n$.  This second fact implies that $T_{b_1} = T_{b_2}$.  By Lemma~\ref{biglemma} there is nonzero $q \in P$ satisfying $\|T_{b_1} q\| = \|T_{b_2} q\| = \|q\|$ and 
$$
b_1 = \frac{T_{b_1} q}{q} = \frac{T_{b_2} q}{q} = b_2,
$$
so $E_1 = U_{b_1} = U_{b_2} = E_2$.
\end{proof}
As the Fourier coefficients of a bounded function are coefficients with respect to an orthonormal basis of the Hilbert space $L^2$, one might wonder if Theorem~\ref{main} is a special case of a simpler result about arbitrary orthonormal bases of $L^2$.  This is not the case.  There are, for example, orthonormal bases $B$ for $L^2$ with the property that for every finite subset $F \subseteq B$, there is an arc $A$ with the property that every element of $F$ is constant on $A$.  (The basis $(e^{2 \pi it} \mapsto f(t))_{f \in H}$, where $H$ is the \emph{Haar basis} of $L^2[0,1]$ constructed in \cite[\S III.1]{haar}, has this property.) In this situation, if $E \subseteq A$ and $E' \subseteq A$ are any two unions of arcs with the same total measure, one will have $\<\chi_E, f\> = \<\chi_{E'}, f\>$ for all $f \in F$: \emph{any} finite collection of coefficients with respect to $B$ must fail to distiguish infinitely many unions of $n$ arcs from one another.  

\section{An algorithm}\label{algorithms} 
Let $\F$ denote the map sending a union of at most $n$ arcs $E$ to the tuple $(\hat{E}(k))_{k=0}^n$ in $\C^{n+1}$.  Suppose $c = (c_k)_{k=0}^n$ is given, and we desire to know whether or not $c$ in the range of $\F$.  The arguments of the previous sections give us the following procedure.  (We use the orthonormal basis of \S\ref{section.toep} to identify linear operators on $P$ with $(n+1)\times(n+1)$ matrices.)
\begin{enumerate}
\item Calculate the $n$th Taylor polynomial at $0$ for $\phi(\frac{c_0}{2} + \sum_{k=1}^n c_k z^k)$, and make its coefficients the first column of a lower-triangular Toeplitz matrix $M$.  
\item Evaluate $\|M\|$.

If $\|M\| \neq 1$, then $c$ is not in the range of $\F$.
\item Otherwise $\|M\| = 1$ and by the Caratheodory-Fejer theorem (see Remark~\ref{carath}) there is a unique Blaschke product $f$ of order at most $n$ satisfying $M = T_f$.  Find $F = U_f$ (e.g. by solving $f(z) = \pm 1$ to get the endpoints of the arcs) and calculate the coefficients of the $n$th order Taylor polynomial at $0$ for $b_F$.

If these coefficients are the first column of $M$ then $b_F = f$ and $c = \F(F)$; otherwise $c$ is not in the range of $\F$.
\end{enumerate}
\begin{remark} The third step of the algorithm is necessary as the map $E \mapsto b_E$ from unions of $n$ arcs to Blaschke products of order $n$ is not surjective.  One can check, for example, that of the Blaschke products $b_t(z) = \frac{z^n - t}{1 - tz^n}$ for real $|t| < 1$, all of which satisfy $U_{b_t} = U_{b_0}$, only $b_0$ is in the range of $E \mapsto b_E$.
\end{remark}
If we know in advance that $c = \F(E)$ is in the range of $\F$, this algorithm can recover $E$ from $c$ in a somewhat explicit fashion.  The matrix $M$ constructed from $c$ is $T_{b_E}$; Lemma~\ref{biglemma} implies that if we choose a nonzero $q \in P$ satisfying $\|M q\| = \|q\|$, we will have $b_E = \frac{M q}{q}$.  If $q$ is chosen so as to have minimal degree, the polynomials $M q$ and $q$ will have no nontrivial common factors.  In this case the degree of $q$ is the order of $b_E$, and the endpoints of the arcs of $E$--- the solutions to $b_E(z) = 1$ and $b_E(z) = -1$--- are the roots of the polynomials $Mq - q$ and $Mq + q$.  A computer has no difficulty carrying out this procedure to find the arcs of $E$ to any given precision from the tuple $c = \F(E)$.

As this algorithm involves solving polynomial equations, we cannot expect symbolic formulas for these endpoints of the arcs of $E$ in terms of the Fourier coefficients $\hat{E}(k)$.  Formulas for the polynomials $Mq \pm q$, however, can be obtained with some effort.  The entries of $M$ are polynomials in $\exp(2 \pi i \hat{E}(0))$, $\hat{E}(1)$, \dots, $\hat{E}(n)$ with complex coefficients.  As $M$ has norm $1$, a vector $q$ will satisfy $\|Mq\| = \|q\|$ if and only if $q$ is an eigenvector for the self-adjoint matrix $\adj{M}M$ corresponding to the eigenvalue $1$; we can find such a $q$ by using Gaussian elimination, for example.  As the entries of $\adj{M} M$ are polynomials in the entries of $M$ and their complex conjugates, the coefficients of $q$ and $Mq \pm q$ will be rational functions in $\exp(2 \pi i \hat{E}(0))$, $\hat{E}(1)$, \dots, $\hat{E}(n)$ and their complex conjugates.  Cases may arise in computing $Mq \pm q$ symbolically: in row reducing the symbolic matrix $\adj{M} M - I$, one needs to know whether or not certain functions of the matrix entries are zero--- but explicit formulas can be obtained in every case.

We give one example.  Suppose that $E$ is a union of at most two arcs, with $\hat{E}(0)$, $\hat{E}(1)$, and $\hat{E}(2)$ given.  Write $E_0  = \exp(2\pi i\hat{E}(0))$ and $E_k = -2\pi i k \hat{E}(k)$ for $k=1,2$.  Carrying out the above procedure, one finds that if both $E_1$ and the denominator of 
$$
a = \frac{E_2 \conj{E_1} + 2 E_1 - E_1^2 \conj{E_1} - 2 E_1 E_0}{E_1^2 E_0 + E_2 E_0 - E_2 + E_1^2},
$$ 
are nonzero, then the starting points of the arcs of $E$ are the solutions $z$ of the equation
$$
z^2 - az + \left(\frac{\conj{E_1} + (1 - E_0) a}{E_1 E_0}\right) = 0.
$$
The endpoints of the arcs of $E$ are given by a similar formula.

\end{document}